 \newtheorem{thm}{Theorem}[section]
 \newtheorem{cor}[thm]{Corollary}
 \newtheorem{lem}[thm]{Lemma}
 \theoremstyle{definition}
 \theoremstyle{remark}
 \newtheorem{exam}[thm]{Example}
 \numberwithin{equation}{section}
\newcommand{\T}{ \textbf{t}_{\lambda}}
\newcommand{\PT}{\mathfrak{U}\widehat{\otimes} \mathfrak{U}}
\newcommand{\uu}{\mathfrak{U}}
\begin{document}

\title[Jordan and Lie derivations of $\phi  $-Johnson amenable Banach algebras]
 {Jordan and Lie derivations of $\phi  $-Johnson amenable Banach algebras}

\author{Hoger Ghahramani and Parvin Zamani}
\thanks{{\scriptsize
\hskip -0.4 true cm \emph{MSC(2020)}: 47B47; 46H20; 46H99; 43A07. 
\newline \emph{Keywords}:$\phi  $-Johnson amenable, character amenable, amenable, Banach algebra, Jordan derivation, Lie derivation.  \\}}

\address{Department of Mathematics, Faculty of Science, University of Kurdistan, P.O. Box 416, Sanandaj, Kurdistan, Iran.}
\email{h.ghahramani@uok.ac.ir; hoger.ghahramani@yahoo.com}

\address{Department of Mathematics, Faculty of Science, University of Kurdistan, P.O. Box 416, Sanandaj, Kurdistan, Iran.}
\email{parvin.zamani88@gmail.com}

%\address{Department of Mathematics, University of Kurdistan, P. O. Box 416, Sanandaj, Iran.}
%\email{w.khodakarami@sci.uok.ac.ir; wania.khodakarami@gmail.com}

\address{}

\email{}

\thanks{}

\thanks{}

\subjclass{}

\keywords{}

\date{}

\dedicatory{}

\commby{}

%%% ----------------------------------------------------------------------

\begin{abstract}
Let $\uu$ be a $\phi $-Johnson amenable Banach algebra in which $\phi$ is a non-zero multiplicative linear functionals on $\uu$. Suppose that $X$ is a Banach $\uu$-bimodule such that $a.x=\phi(a)x$ for all $a\in \uu$, $x\in X$ or $x.a=\phi(a)x$ for all $a\in \uu$, $x\in X$. We show that every continuous Jordan derivation from $\uu$ to $X$ is a derivation, and every continuous Lie derivation from $\uu$ to $X$ decomposed into the sum of a continuous derivation and a continuous center-valued trace. Then we apply our results for character amenable Banach algebras and amenable Banach algebras. We  also provide some results about $\phi$-Johnson amenability, especially we give some conditions equivalent to $\phi $-Johnson amenability.
\end{abstract}

%%% ----------------------------------------------------------------------
\maketitle
%%% ----------------------------------------------------------------------
%%*************************************************************************************************************
\section{Introduction }
The study of Jordan and Lie structures of associative algebras is one of the favorite study paths in mathematics. Among the interesting cases in this field of study, we can refer to the study of Jordan derivations and Lie derivations on associative algebras. Let $\uu$ be an associative algebra, $X$ be a $\uu$-bimodule, and $\mathcal{Z}_{\uu}(X)$ be the centre of $X$. We set $a\circ b= ab+ba$ (Jordan product) and $[a,b]=ab-ba$ (Lie product) for all $a,b \in \uu$. Recall that a linear mapping $\delta : \uu \rightarrow X$  is called a \textit{derivation} if 
\[ \delta(ab)=\delta(a)b+a\delta(b) \quad\quad (a,b\in \uu),\]
a \textit{Jordan derivation} if 
\[ \delta(a\circ b)=\delta(a)\circ b+a\circ\delta(b) \quad\quad (a,b\in \uu),\]
and a \textit{Lie derivation} if 
\[ \delta([a, b])=[\delta(a), b]+[a,\delta(b)] \quad\quad (a,b\in \uu).\]
Moreover, if $d: \uu \rightarrow X$ is a derivation and $\varphi: \uu\rightarrow \mathcal{Z}_{\uu}(X)$ is a \textit{centre-valued trace}, that is, a linear mapping such that $\varphi([a,b])=0$ for all $a,b\in \uu$, then the map $\delta=d+\varphi$ is a Lie derivation. We shall say that is a Lie derivation $\delta$ is in \textit{standard form} in the case $\delta=d+\varphi$ for $d$ and $\varphi$ as above. Jordan derivations and Lie derivations of associative algebras play significant roles in various mathematical areas, in particular in matrix theory, in ring theory, in operator algebra theory and in the theory of Banach algebras. There are two common problems in this context: 
\begin{itemize}
\item[(1)] whether Jordan derivations are derivations; 
\item[(2)] whether Lie derivations are of the standard form. 
\end{itemize}
\par
Many studies have been carried out in line with the two problems raised, and here we mention some of them that are related to Banach algebras. 
In relation to Problem 1, first Herstein \cite{her} proved that every additive Jordan derivation on a 2-torsion-free prime ring is an additive derivation. Sinclair \cite{sinc} showed that every continuous Jordan derivation on a semisimple Banach algebra is a derivation, and later in \cite{cus}, Herstein’s result was generalized for 2-torsion-free semiprime rings. Johnson showed in \cite{joh3} that any continuous Jordan derivation from a symmetrically amenable Banach algebra $\uu$ into a Banach $\uu$-bimodule is a derivation. As a consequence he obtained the same result for continuous Jordan derivations on arbitrary $C^{*}$-algebras, although not every (amenable) $C^{*}$-algebra is symmetrically amenable. Then in \cite{per} it was proved that every Jordan derivation from a $C^{*}$-algebra $\uu$ into a Banach $\uu$-bimodule $X$ is a derivation. For more studies concerning Jordan derivations we refer the reader to \cite{alaminos0, gh1, gh2} and the references therein. Regarding Problem 2, which is actually along Herstein’s programme \cite{her2}, Martindale \cite{mar} proved that an additive Lie derivation of certain primitive ring is always a sum of a derivation and an additive mapping from the ring into its centroid. Miers \cite{mie} considered von Neumann algebras and showed that every Lie derivation is a standard Lie derivation. Johnson showed in \cite{joh3} that any continuous Lie derivation from a symmetrically amenable Banach algebra $\uu$ into a Banach $\uu$-bimodule decomposed into the sum of a continuous derivation and a continuous center-valued trace, and as a consequence he obtained the same result for continuous Lie derivations on arbitrary $C^{*}$-algebras. In \cite{math}, it was proved that every Lie derivation on a $C^{*}$-algebra into itself is in standard form. Then in \cite{alaminos0} it was showed that all Lie derivations from a von Neumann algebra $\uu$ into any Banach $\uu$-bimodule are standard. Continuing the proof of Johnson's results without the continuity condition, the authors showed in \cite{alaminos} that every Lie derivation on a symmetrically amenable semisimple Banach algebra into itself can be uniquely decomposed into the sum of a derivation and a centre-valued trace. To study more about the obtained results concerning determination of the structure
of Lie derivations see \cite{alaminos0, beh, gh0, li} and references therein.  
\par 
We know that every symmetric amenable Banach algebra is an amenable Banach algebra, and the converse of this is not necessarily true (see \cite{joh3}). On the way to the answer to Problems 1 and 2 and according to the mentioned results, the question arises whether it is possible to obtain Johnson's results for Jordan derivations and Lie derivations from amenable Banach algebras into appropriate Banach modules? In this paper, we provide the answer to this question for a larger class of amenable Banach algebras, called $\phi $-Johnson amenable Banach algebras. More precisely, if $\uu$ is a $\phi $-Johnson amenable Banach algebra in which $\phi$ is a non-zero multiplicative linear functionals on $\uu$, and $X$ is a Banach $\uu$-bimodule such that $a.x=\phi(a)x$ for all $a\in \uu$, $x\in X$ or $x.a=\phi(a)x$ for all $a\in \uu$, $x\in X$, then we show that every continuous Jordan derivation from $\uu$ to $X$ is a derivation, and every continuous Lie derivation from $\uu$ to $X$ decomposed into the sum of a continuous derivation and a continuous center-valued trace (Theorems \ref{jordan} and \ref{Lie}). To prove these results, we first present some conditions equivalent to $\phi $-Johnson amenability, which can be interesting in themselves (Theorem \ref{equivalent phi amenable}). As an application, we apply the main results for character amenable Banach algebras (Corollaries \ref{Jordan character amenable} and \ref{Lie character amenable}) and amenable Banach algebras (Corollaries \ref{Jordan amenable} and \ref{Lie amenable}), and also obtain a necessary condition for $ \phi$-Johnson amenability, character amenability and amenability based on point Jordan derivations (Corollary \ref{point Jordan der}). Further, several examples are presented which illustrate limitations on extending some of the theory developed (Examples \ref{example jordan der} and \ref{example Lie der}). This article is organized as follows. In Section 2, definitions and required results are introduced. Section 3 is devoted to some results about $ \phi$-Johnson amenability, especially the equivalent conditions with $ \phi$-Johnson amenability are presented in this section. Section 4 is about Jordan derivations of $ \phi$-Johnson amenable Banach algebras and related results. In Section 5, Lie derivations of $ \phi$-Johnson amenable Banach algebras and related results are studied.
%%*************************************************************************************************************
\section{Preliminaries }
In this section we fix the notation, and recall some basic definitions and points which will be used in the next sections. Assume that $\uu$ is a Banach algebra and $X$ is a Banach $\uu$-bimodule. Note that $\mathcal{Z}_{\uu}(X)$ represents the \textit{centre} of $X$, which is defined as 
\[ \mathcal{Z}_{\uu}(X)= \lbrace x\in X \, \vert \, a.x=x.a \, \,  \text{for all}\, \, a\in \uu\rbrace .\] 
\par 
The space of all continuous derivations from $\uu$ into $X$ is denoted by $Z^1(\uu,X)$. A derivation $\delta$ is called \textit{inner derivation}, if there is $x\in X$ such that $\delta(a)=a\cdot x-x\cdot a$ for all $a\in \uu$. Each inner derivation is continuous and $N^1(\uu,X)$ is the space of all inner derivations from $\uu$ into $X$. The \textit{first cohomology group} of $\uu$ with coefficient in $X$ is the quotient space $H^1(\uu,X)=Z^1(\uu,X)/N^1(\uu,X)$. We observe that $X^*$ is a Banach $\uu$-bimodule with the following module operations:
\[\langle a\cdot f, x \rangle=\langle f, x\cdot a \rangle,\quad \langle f\cdot a, x \rangle=\langle f, a\cdot x \rangle\quad\quad (a\in  \uu,  x\in  X, f\in X^*).\]
We call $X^*$ the \textit{dual module} of $X$. Recall that a Banach algebra $\uu$ is said to be \textit{amenable} if for every Banach $\uu$-bimodule $X$, we have $H^1(\uu,X^*)=\lbrace 0\rbrace$. The notion of an amenable Banach algebra was introduced by Johnson in 1972, and it was based on the amenability of locally compact group $G$ \cite{joh1}. In \cite{joh2} (see also \cite[2.9.65]{dal}), it is proved that a Banach algebra $\uu$ is amenable if and only if $\uu$ has a bounded \textit{approximate diagonal}; i.e. a bounded net $\lbrace \T\rbrace_{\lambda\in\Lambda}$ in the projective tensor product $\PT$ that satisfies the following two conditions
\begin{enumerate}
\item[$(1)$] $ a. \T -\T .a  \longrightarrow 0 $;
\item[$(2)$] $ \pi (\T)a \longrightarrow a $
\end{enumerate}
for all $a\in \uu$, where the operations on $\PT$ are defined through 
\[a.(b\otimes c)=ab\otimes c, \quad (b\otimes c).a=b\otimes ca \quad  \text{and} \quad \pi(b\otimes c)=bc \]
for all $a,b,c\in \uu$. The \textit{flip map} on $\PT$ is defined by 
\[ (a\otimes b)^{\circ}=b\otimes a \quad (a,b \in \uu),\]
and an element $\textbf{t}\in \PT$ is called \textit{symmetric} if $\textbf{t}^{\circ}=\textbf{t}$. Johnson in \cite{joh3} is introduced symmetric amenability of Banach algebras. He called a Banach algebra $\uu$ is \textit{symmetrically amenable} if it has a bounded approximate diagonal consisting of symmetric tensors. The properties and examples of symmetrically amenable Banach algebras can be found in \cite{joh3}. 
\par 
Throughout this paper, $\Delta(\uu)$ denotes the character space of $\uu$, that is, all non-zero multiplicative linear functionals on $\uu$. Let  $\phi \in \Delta (\uu)\cup \lbrace 0 \rbrace$. We denote by $\mathcal{M}^{\uu}_{\phi_{r}}$ ($\mathcal{M}^{\uu}_{\phi_{l}}$) the class of all Banach $\uu$-bimodules $X$ with the right (left) module action $x \cdot a = \phi(a)x$ ($a \cdot x = \phi(a)x$) for every $a \in \uu$ and $x \in X$, and $\mathcal{SM}^{\uu}_{\phi} =\mathcal{M}^{\uu}_{\phi_{r}}\cap \mathcal{M}^{\uu}_{\phi_{l}}$. The Banach algebra $\uu$ is called \textit{right $\phi$-amenable} (\textit{left $\phi$-amenable}) if there exists a bounded linear functional $m$ on $\uu^{*}$ satisfying $m(\phi)=1$ and $m(f.a)=\phi(a)m(f)$ ($m(a.f)=\phi(a)m(f)$). This definition was first introduced in \cite{kaniuth} for right $\phi$-amenable Banach algebras. It has been shown that $\uu$ is right $\phi$-amenable if and only if $H^1(\uu,X^*) = \lbrace 0 \rbrace$ for every $X\in \mathcal{M}^{\uu}_{\phi_{l}}$ (\cite[Theorem 1.1]{kaniuth}) if and only if there exists a bounded net $ \lbrace e_{\lambda}\rbrace_{{\lambda\in\Lambda}} $ in $\uu$ such that $ae_{\lambda}-\phi(a)e_{\lambda}\longrightarrow 0$ for all $a\in \uu$ and $ \phi(e_{\lambda})=1 $ for each $ \lambda\in\Lambda $ (\cite[Theorem 1.4]{kaniuth}). A similar result can also be obtained for the left $\phi$-amenability. Indeed, $\uu$ is left $\phi$-amenable if and only if $H^1(\uu,X^*) = \lbrace 0 \rbrace$ for every $X\in \mathcal{M}^{\uu}_{\phi_{r}}$ if and only if there exists a bounded net $ \lbrace e_{\lambda}\rbrace_{{\lambda\in\Lambda}} $ in $\uu$ such that $e_{\lambda}a-\phi(a)e_{\lambda}\longrightarrow 0$ for all $a\in \uu$ and $ \phi(e_{\lambda})=1 $ for each $ \lambda\in\Lambda $. Sangani Monfared has defined the character amenability in \cite{sangani} such that $\uu$ is \textit{character amenable} whenever $H^1(\uu,X^*) = \lbrace 0 \rbrace$ for all $\phi\in \Delta (\uu)\cup \lbrace 0 \rbrace$ and all $X\in\mathcal{M}^{\uu}_{\phi_{r}}\cup\mathcal{M}^{\uu}_{\phi_{l}}$. 
Along these studies, in \cite{sahami}, $\phi $-Johnson amenability is defined. The Banach algebra $\uu$ is called \textit{$\phi $-Johnson amenable}, if there exists an element $m\in (\PT)^{**}$ such that $a.m=m.a$ and $\tilde{\phi}\circ \pi^{**}(m)=1$, for every $a\in \uu$, where $\tilde{\phi}$ is the unique extension of $\phi\in \Delta (\uu)$ on $\uu^{**}$ defined by $\tilde{\phi}(F)=F(\phi)$ for any $F\in \uu^{**}$. The relationship between this concept and other types of amenability will be explained in the next section (Theorem \ref{equivalent phi amenable}).
\par 
Suppose that $\ell_{\infty}(\Lambda)$ is the Banach space of all bounded nets $\lbrace z_{\lambda} \rbrace_{\lambda\in\Lambda}$ of complex
numbers, and $c(\Lambda)$ is the closed subspace of $\ell_{\infty}(\Lambda)$ consisting of all convergent nets. The mapping 
\[ \lbrace z_{\lambda} \rbrace_{\lambda\in\Lambda}\mapsto \lim _{\lambda\in \Lambda} z_{\lambda}\]
is a continuous linear functional with norm one on $c(\Lambda)$. By the Hahn–Banach theorem there exists a norm-preserving extension of $\lim _{\lambda\in \Lambda}$ to $\ell_{\infty}(\Lambda)$, which is called a \textit{generalized limit} on $\Lambda$, and we will denote by $Lim_{\lambda\in \Lambda}$.
%%*************************************************************************************************************
%%%%%%%%%%%%%%%%%%%%%%%%%%%%%%%%%%%%%%%%%%%%%%%%%%%%%%%%
%%****************************************************************************************************************
\section{Some useful technical results}
In this section, we present some results about $\phi $-Johnson amenable Banach algebras, which are used to prove our main results, and these results can be interesting in themselves. First, in the following theorem, we provide some conditions equivalent to $\phi $-Johnson amenability.
%%___________________________________________________________________________________________________
 \begin{thm}\label{equivalent phi amenable}
  Let $\uu $ be a Banach algebra and  $ \phi \in \Delta (\uu ) $. The following are equivalent.
 \begin{itemize}
  \item[(i)]
 $ \uu $ is $\phi  $-Johnson amenable;
  \item[(ii)]
  there is a bounded net $\lbrace\T\rbrace_{\lambda\in\Lambda} $  in $ \PT $ such that
 \[  a.\T-\T.a \longrightarrow 0 \quad \text{and} \quad \phi \circ\pi(\T)\longrightarrow 1 \]
 for each $ a\in\uu $;
 \item[(iii)]
 $ \uu $ is right and left $ \phi $-amenable;
 \item[(iv)]
 there is a bounded net $ \lbrace e_{\lambda}\rbrace_{{\lambda\in\Lambda}} $ in  $\uu $ such that
 \[ ae_{\lambda}-\phi(a)e_{\lambda}\longrightarrow 0 \quad \text{and} \quad e_{\lambda} a-\phi(a)e_{\lambda}\longrightarrow 0 \]
 for each $ a\in\uu $ and $ \phi(e_{\lambda})=1 $ for each $ \lambda\in\Lambda $;
 \item[(v)]
 there is a symmetric bounded net $\lbrace\T\rbrace_{\lambda\in\Lambda} $  in $ \PT $ such that
 \[  a.\T-\T.a \longrightarrow 0 \quad \text{and} \quad \phi \circ\pi(\T)\longrightarrow 1 \]
 for each $ a\in\uu $;
 \item[(vi)]
 there is a continuous linear functional $ m $ on $ \uu^{*} $ such that 
 \[  m(\phi)=1  \quad \text{and} \quad  m(f.a)=m(a.f)=\phi(a) m(f) \]
 for each  $ a\in\uu $ and $ f\in\uu^{*}$.
 \end{itemize}
 \end{thm}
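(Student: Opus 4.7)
The plan is to establish the cyclic chain $(i)\Rightarrow(ii)\Rightarrow(iii)\Rightarrow(iv)\Rightarrow(v)\Rightarrow(i)$, supplemented by $(iv)\Leftrightarrow(vi)$. The main tools are Goldstine's and Mazur's theorems, the tensor-factor projections $\phi\otimes\mathrm{id}$ and $\mathrm{id}\otimes\phi$, and two small ``combining'' constructions: a product net for fusing one-sided approximate identities and a tensor square for producing a symmetric approximate diagonal.

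For $(i)\Rightarrow(ii)$, Goldstine produces a bounded net $\{\T\}$ in $\PT$ converging weak$^{*}$ to $m$; the hypotheses transfer to weak convergence $a\cdot\T-\T\cdot a\to 0$ and scalar convergence $\phi\circ\pi(\T)\to 1$, and Mazur (convex combinations) upgrades the former to norm. For $(ii)\Rightarrow(iii)$, set $e_{\lambda}=(\phi\otimes\mathrm{id})(\T_{\lambda})$ and $f_{\lambda}=(\mathrm{id}\otimes\phi)(\T_{\lambda})$; applying these projections to $a\cdot\T-\T\cdot a$ yields $e_{\lambda}a-\phi(a)e_{\lambda}\to 0$ and $af_{\lambda}-\phi(a)f_{\lambda}\to 0$ respectively, with $\phi(e_{\lambda}),\phi(f_{\lambda})\to 1$ (rescale to get equality), which by Kaniuth's characterizations recalled in Section~2 gives both left and right $\phi$-amenability.

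For $(iii)\Rightarrow(iv)$, the key device is the product net $g_{(\lambda,\mu)}=e_{\lambda}f_{\mu}$, where $\{e_{\lambda}\}$ witnesses right and $\{f_{\mu}\}$ witnesses left $\phi$-amenability; the identities
\[ ag_{(\lambda,\mu)}-\phi(a)g_{(\lambda,\mu)}=(ae_{\lambda}-\phi(a)e_{\lambda})f_{\mu},\quad g_{(\lambda,\mu)}a-\phi(a)g_{(\lambda,\mu)}=e_{\lambda}(f_{\mu}a-\phi(a)f_{\mu}) \]
together with $\phi(g_{(\lambda,\mu)})=1$ produce a single bounded net realizing both convergences of (iv). For $(iv)\Rightarrow(v)$, form $\T_{\lambda}=e_{\lambda}\otimes e_{\lambda}$, which is automatically symmetric; expanding
\[ a\cdot\T_{\lambda}-\T_{\lambda}\cdot a=(ae_{\lambda}-\phi(a)e_{\lambda})\otimes e_{\lambda}-e_{\lambda}\otimes(e_{\lambda}a-\phi(a)e_{\lambda}) \]
shows that it tends to zero in the projective norm, and $\phi\circ\pi(\T_{\lambda})=\phi(e_{\lambda})^{2}=1$. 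Finally $(v)\Rightarrow(i)$ is an application of Banach--Alaoglu: any weak$^{*}$-cluster point $m$ of the bounded net in $(\PT)^{**}$ inherits $a\cdot m=m\cdot a$ (since the commutators vanish in norm, hence weakly) and $\tilde{\phi}\circ\pi^{**}(m)=1$.

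The remaining equivalence $(iv)\Leftrightarrow(vi)$ passes through the generalized limit of Section~2: $(iv)\Rightarrow(vi)$ by setting $m(f)=Lim_{\lambda}f(e_{\lambda})$, whose invariance properties follow directly from the two one-sided convergences and the module-action formulas for $\uu^{*}$; $(vi)\Rightarrow(iv)$ by applying Goldstine/Mazur inside $\uu^{**}$ to the functional $m\in\uu^{**}$. I expect the main obstacle to be $(iii)\Rightarrow(iv)$, the step that fuses independent left- and right-invariant nets into a single two-sided one; the product trick $g_{(\lambda,\mu)}=e_{\lambda}f_{\mu}$ resolves this and, via the tensor square $e_{\lambda}\otimes e_{\lambda}$, simultaneously explains why the symmetric approximate diagonal of (v) is available for free once (iv) is in hand.
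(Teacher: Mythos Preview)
Your proposal is correct and follows essentially the same route as the paper. The paper outsources $(i)\Leftrightarrow(ii)\Leftrightarrow(iii)$ to \cite{sahami} rather than spelling out the Goldstine/Mazur and tensor-projection arguments, but for the implications it proves in detail---$(iii)\Rightarrow(iv)$ via the product net $e_{\alpha}e_{\beta}$, $(iv)\Rightarrow(v)$ via the symmetric tensor $e_{\lambda}\otimes e_{\lambda}$, and $(iv)\Rightarrow(vi)$ via a weak$^{*}$-limit of the net---your constructions coincide with it exactly; the only structural difference is that the paper closes the cycle with the trivial $(vi)\Rightarrow(iii)$ instead of your direct $(vi)\Rightarrow(iv)$.
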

 \begin{proof}
 (i)$\Leftrightarrow$(ii): It is obtained from \cite[Lemma 2.1]{sahami}.\\
 (ii)$\Leftrightarrow$(iii): It is obtained from \cite[Proposition 2.2]{sahami}.\\
 (iii)$\Rightarrow$(iv): From \cite[Theorem $1.4$]{kaniuth} there are bounded nets $ \lbrace e_{\alpha}\rbrace_{\alpha\in I} $ and
 $\lbrace e_{\beta}\rbrace_{\beta\in J} $
 such that $ \phi (e_{\alpha})=\phi (e_{\beta})=1 $ for each $ \alpha \in I , \beta\in J $ and
 \[\|ae_{\alpha}-\phi(a)e_{\alpha}\| \longrightarrow 0  \quad \text{and} \quad \|e_{\beta}a-\phi(a)e_{\beta}\|\longrightarrow 0\]
for each $ a\in\uu $. Suppose that $ \Lambda=I\times J $ and for each $ \lambda=(\alpha,\beta)\in\Lambda $ define $e_{\lambda}=e_{\alpha}e_{\beta} $. The net $\lbrace e_{\lambda}\rbrace_{\lambda\in\Lambda} $ is bounded and
 $ \phi(e_{\lambda})=\phi(e_{\alpha}) \phi(e_{\beta})=1$ for all $ \lambda\in\Lambda$. Also, we have 
 \[ \|ae_{\lambda}-\phi(a)e_{\lambda}\|=\|(ae_{\alpha}-\phi(a)e_{\alpha})e_{\beta}\| \leq\|ae_{\alpha}-\phi(a)e_{\alpha}\|\|e_{\beta}\|\longrightarrow 0 \]
 and 
\[  \|e_{\lambda}a-\phi(a)e_{\lambda}\|=\|e_{\alpha}(e_{\beta}-\phi(a)e_{\beta})\|\leq\|e_{\alpha}\|\|e_{\beta}a-\phi(a)e_{\beta}\|\longrightarrow 0\]
for each $ a\in\uu $.\\
(iv)$\Rightarrow$(v): For each $ \lambda\in\Lambda $ define $\T=e_{\lambda} \otimes e_{\lambda} \in \PT $. In this case $ \lbrace \T\rbrace_{\lambda}\in\Lambda $ is a bounded net in $\PT $ such that
\[ \phi \circ\pi(\T)=\phi(e_{\lambda})\phi(e_{\lambda})=1 \quad \quad (\lambda\in\Lambda) \]
and for each $ \lambda\in\Lambda $ we have $ \T^{\circ}  =\T  $. Also 
\begin{equation*}
\begin{split}
\|a. \T -\T .a\|&=\|ae_{\lambda}\otimes e_{\lambda}-e_{\lambda}\otimes e_{\lambda}a\| \\ &
\leq\|ae_{\lambda}\otimes e_{\lambda}-\phi(a)e_{\lambda}\otimes e_{\lambda}\|+\|\phi(a)e_{\lambda}\otimes e_{\lambda}-e_{\lambda}\otimes e_{\lambda}a\| \\&
\leq\|ae_{\lambda}-\phi(a)e_{\lambda}\|\|e_{\lambda}\|+\|e_{\lambda}a-\phi(a)e_{\lambda}\| \|e_{\lambda}\| \longrightarrow 0
\end{split}
\end{equation*}  
for each $ a\in\uu $. \\
(v)$\Rightarrow$(ii): is clear.\\
(iv)$\Rightarrow$(vi): Choose a subnet $\lbrace e_{\lambda_{\gamma}} \rbrace_{\gamma\in\Omega}$ of $\lbrace e _{\lambda}\rbrace_{\lambda\in\Lambda} $ that is converges to $ m\in\uu^{**} $ in the $ w^{*}$-topology of $\uu^{**} $. So
\[ m(\phi)=\lim_{\gamma} e_{\lambda_{\gamma}}(\phi)=\lim_{\gamma} \phi(e_{\lambda_{\gamma}})=1. \]
We have
 \[ a.m-\phi(a)m=w^{*}-\lim _{\gamma}(ae_{\lambda_{\gamma}}-\phi(a)e_{\lambda_{\gamma}})=0 \] 
 and 
 \[m.a-\phi(a)m=w^{*}-\lim _{\gamma}(e_{\lambda_{\gamma}}a-\phi(a)e_{\lambda_{\gamma}})=0 \]
 for each $ a\in\uu $. Hence 
 \[  m(f.a)=\phi(a)m(f) \quad \text{and} \quad m(a.f)=\phi(a)m(f) \]
 for each $ a\in\uu $ and $ f\in\uu^{*} $.\\
 (vi)$\Rightarrow$(iii): It is clear according to the definition of right and left $ \phi$-amenability.
 \end{proof}
 In view of this theorem it is clear that if $ \uu $ is character ameneble, then $ \uu $ is $ \phi $-Johnson amenable for every $ \phi\in\Delta(\uu) $.
 \par 
 If we want to define symmetric $ \phi $-Johnson amenability based on the definition of symmetric amenability, according to the previous theorem, we see that this definition is equivalent to $ \phi $-Johnson amenability.
 \par 
 Let $ X $ be a bimodule over the algebra $ \uu $. The derivation $ \delta:\uu\rightarrow X $ is called a \textit{central derivation} whenever $ \delta(\uu)\subseteq \mathcal{Z}_{\uu}(X) $. It is proved in \cite[Proposition 1]{feizi} that if $\uu$ is right or left $\phi$-amenable and $X\in \mathcal{SM}^{\uu}_{\phi}$, then $H^{1}(\uu , X)=\lbrace 0 \rbrace$. In the next lemma, we determine the central derivations on right or left $ \phi $-amenable Banach algebras.
 %_______________________________________________________________________________________________________
 \begin{lem}\label{central derivation}
 Let $ \uu $ be a Banach algebra, $ \phi\in\Delta(\uu) $, and $ \uu $ be right or left $ \phi $-amenable. If $ X\in\mathcal{M}^{\uu}_{\phi_{r}}\cup\mathcal{M}^{\uu}_{\phi_{l}} $ and $ \delta:\uu\rightarrow X $ is a continuous central derivation, then $ \delta=0 $.
 \end{lem}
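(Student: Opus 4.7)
The plan is to observe that the hypothesis $\delta(\uu)\subseteq \mathcal{Z}_{\uu}(X)$ combined with the prescribed $\phi$-module action forces \emph{both} module actions to collapse to scalar multiplication by $\phi$ on elements of $\delta(\uu)$. Indeed, centrality gives $a\cdot\delta(b)=\delta(b)\cdot a$ for all $a,b\in\uu$. If $X\in\mathcal{M}^{\uu}_{\phi_l}$, the left-hand side equals $\phi(a)\delta(b)$, so we get $\delta(b)\cdot a=\phi(a)\delta(b)$ as well; if instead $X\in\mathcal{M}^{\uu}_{\phi_r}$, the right-hand side equals $\phi(a)\delta(b)$ and we conclude $a\cdot\delta(b)=\phi(a)\delta(b)$. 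Either way, the Leibniz expansion of $\delta$ on a product becomes computable purely in terms of $\phi$.

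Next, I would invoke the net characterisations of one-sided $\phi$-amenability recalled in Section 2. Assume first that $\uu$ is left $\phi$-amenable; pick a bounded net $\{e_\lambda\}_{\lambda\in\Lambda}$ in $\uu$ with $\phi(e_\lambda)=1$ and $e_\lambda a-\phi(a)e_\lambda\longrightarrow 0$ for every $a\in\uu$. Expanding and using the collapse from the previous step,
\[
\delta(e_\lambda a)=\delta(e_\lambda)\cdot a+e_\lambda\cdot\delta(a)=\phi(a)\delta(e_\lambda)+\phi(e_\lambda)\delta(a)=\phi(a)\delta(e_\lambda)+\delta(a),
\]
so by linearity $\delta(a)=\delta\bigl(e_\lambda a-\phi(a)e_\lambda\bigr)$ for every $\lambda$. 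Since the argument on the right tends to $0$ and $\delta$ is continuous, we conclude $\delta(a)=0$.

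If instead $\uu$ is right $\phi$-amenable, the symmetric argument applies: take the bounded net satisfying $ae_\lambda-\phi(a)e_\lambda\longrightarrow 0$ and $\phi(e_\lambda)=1$, expand $\delta(ae_\lambda)$ by the Leibniz rule to obtain $\delta(ae_\lambda)=\delta(a)+\phi(a)\delta(e_\lambda)$, and conclude $\delta(a)=\delta(ae_\lambda-\phi(a)e_\lambda)\to 0$ by continuity. There is no serious obstacle here; the whole argument is a careful bookkeeping of four combinations (each of the two hypotheses on $X$ paired with each of the two forms of $\phi$-amenability), all of which reduce to the same three-line calculation once one notices the module-action collapse in the first paragraph.
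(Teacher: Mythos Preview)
Your proof is correct. Both you and the paper begin with the same observation: since $\delta(\uu)\subseteq\mathcal{Z}_{\uu}(X)$ and one of the two module actions is already given by $\phi$, centrality forces $a\cdot\delta(b)=\delta(b)\cdot a=\phi(a)\delta(b)$ for all $a,b\in\uu$. From there the arguments diverge. The paper notes that this makes $\mathcal{Z}_{\uu}(X)$ a Banach $\uu$-bimodule in $\mathcal{SM}^{\uu}_{\phi}$ and then invokes \cite[Proposition~1]{feizi}, which says $H^{1}(\uu,Y)=\{0\}$ for any $Y\in\mathcal{SM}^{\uu}_{\phi}$ under one-sided $\phi$-amenability; hence $\delta$ is inner into $\mathcal{Z}_{\uu}(X)$ and therefore zero. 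You instead bypass the external reference entirely and use the bounded net $\{e_\lambda\}$ from the characterisation of one-sided $\phi$-amenability to write $\delta(a)=\delta(e_\lambda a-\phi(a)e_\lambda)$ (or its right-handed analogue) and let continuity finish the job. Your route is more elementary and self-contained, and in effect reproves the special case of the cited proposition that is actually needed here; the paper's route is shorter on the page but relies on outside machinery.
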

 \begin{proof} 
 Suppose that $ \uu $ is left $ \phi $-amenable. For each $a\in\uu$ and $ x\in\mathcal{Z}_{\uu}(X) $ we have $a.x=x.a=\phi(a)x $. So $ \mathcal{Z}_{\uu}(X) $ is a Banach $ \uu $-bimodule such that $ \mathcal{Z}_{\uu}(X)\in \mathcal{SM}^{\uu}_{\phi} $. By \cite[Proposition 1]{feizi} the derivation $ \delta:\uu\rightarrow \mathcal{Z}_{\uu}(X) $ is inner, and hence $ \delta=0 $.\par 
If $ \uu$ is right $ \phi $-amenable the proof is similar.
 \end{proof}
 %%_______________________________________________________________________________________________________________
The proof of the following lemma is derived from \cite[Lemma 3]{alaminos}. 
\begin{lem}\label{maps into submodule}
Let $ \uu $ be a right or left $ \phi $-amenable Banach algebra where $ \phi\in\Delta(\uu) $. Suppose that  $ Y\in\mathcal{M}^{\uu}_{\phi_{r}}\cup\mathcal{M}^{\uu}_{\phi_{l}} $ and $ X $ is a closed $ \uu $-subbimodule of $ Y $. If
 $ \delta:\uu\rightarrow Y $ is a continuous derivation and 
 $ \tau:\uu\rightarrow \mathcal{Z}_{\uu}(Y)$ is a linear map such that $ (\delta+\tau)(\uu)\subseteq X $, then $ \delta(\uu)\subseteq X $ and $ \tau(\uu)\subseteq\mathcal{Z}_{\uu}(X) $.
 \end{lem}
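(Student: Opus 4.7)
The plan is to pass to the quotient module $Y/X$ and use Lemma \ref{central derivation} to kill the induced derivation. First I would verify that $Y/X$ is a Banach $\mathfrak{U}$-bimodule lying in the same class as $Y$ (i.e.\ in $\mathcal{M}^{\mathfrak{U}}_{\phi_{r}}\cup\mathcal{M}^{\mathfrak{U}}_{\phi_{l}}$), which is immediate from the quotient action: if, say, $a\cdot y=\phi(a)y$ for every $y\in Y$, then $a\cdot(y+X)=\phi(a)(y+X)$. Likewise, the image of $\mathcal{Z}_{\mathfrak{U}}(Y)$ under the quotient map $q:Y\to Y/X$ lies in $\mathcal{Z}_{\mathfrak{U}}(Y/X)$, because the centrality condition is preserved modulo any subbimodule.

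Next, I would set $\bar\delta=q\circ\delta$ and $\bar\tau=q\circ\tau$. Then $\bar\delta:\mathfrak{U}\to Y/X$ is a continuous derivation (continuity follows from the continuity of $\delta$ and boundedness of $q$), and $\bar\tau:\mathfrak{U}\to\mathcal{Z}_{\mathfrak{U}}(Y/X)$ is linear. The hypothesis $(\delta+\tau)(\mathfrak{U})\subseteq X$ gives $\bar\delta+\bar\tau=0$, so $\bar\delta=-\bar\tau$. In particular, $\bar\delta(\mathfrak{U})\subseteq\mathcal{Z}_{\mathfrak{U}}(Y/X)$, i.e.\ $\bar\delta$ is a continuous central derivation into $Y/X$. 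Applying Lemma \ref{central derivation} (which is legal since $\mathfrak{U}$ is right or left $\phi$-amenable and $Y/X$ lies in $\mathcal{M}^{\mathfrak{U}}_{\phi_{r}}\cup\mathcal{M}^{\mathfrak{U}}_{\phi_{l}}$) forces $\bar\delta=0$, which yields $\delta(\mathfrak{U})\subseteq X$.

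From $\delta(\mathfrak{U})\subseteq X$ and $(\delta+\tau)(\mathfrak{U})\subseteq X$ I would then deduce $\tau(\mathfrak{U})\subseteq X$. Combined with the assumption $\tau(\mathfrak{U})\subseteq\mathcal{Z}_{\mathfrak{U}}(Y)$ this gives $\tau(\mathfrak{U})\subseteq X\cap\mathcal{Z}_{\mathfrak{U}}(Y)$, and since any element of $Y$ that is central in $Y$ and lies in $X$ is obviously central in $X$ (as $X$ is a subbimodule, both $a\cdot x$ and $x\cdot a$ are computed inside $Y$ and coincide there), we conclude $\tau(\mathfrak{U})\subseteq\mathcal{Z}_{\mathfrak{U}}(X)$.

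The only part requiring any care is the passage to the quotient: one must check that $Y/X$ stays within the distinguished class of one-sided $\phi$-modules so that Lemma \ref{central derivation} is applicable, and that the centre-valued condition on $\tau$ descends. Both are formal checks, so I do not expect a genuine obstacle; the argument is essentially an algebraic decoupling trick enabled by the vanishing of continuous central derivations established in the previous lemma.
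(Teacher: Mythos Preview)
Your argument is correct and follows essentially the same route as the paper: pass to the quotient bimodule $Y/X$, observe that $q\circ\delta=-q\circ\tau$ is a continuous central derivation into a module of the required type, invoke Lemma~\ref{central derivation} to conclude $\delta(\mathfrak{U})\subseteq X$, and then read off $\tau(\mathfrak{U})\subseteq X\cap\mathcal{Z}_{\mathfrak{U}}(Y)=\mathcal{Z}_{\mathfrak{U}}(X)$. The checks you flag (that $Y/X$ stays in $\mathcal{M}^{\mathfrak{U}}_{\phi_{r}}\cup\mathcal{M}^{\mathfrak{U}}_{\phi_{l}}$ and that centrality descends) are exactly the ones the paper notes, and they are indeed routine.
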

 \begin{proof}
 Let $ \pi:Y\rightarrow \dfrac{Y}{X} $ be the quotient map where $ W=\dfrac{Y}{X} $ is the quotient Banach $ \uu $-bimodule. By our assumption
 $ X,W\in \mathcal{M}^{\uu}_{\phi_{r}}\cup\mathcal{M}^{\uu}_{\phi_{l}}$ and
\[0=\pi\circ(\delta+\tau)=\pi\circ\delta+\pi\circ\tau .\]
 Therefore 
\[\pi\circ\delta=-\pi\circ\tau . \]
Since $ \pi $ maps $ \mathcal{Z}_{\uu}(Y) $ into $ \mathcal{Z}_{\uu}(W) $ and $ \tau(\uu)\subseteq\mathcal{Z}_{\uu}(Y) $, it follows that 
$ \pi\circ\delta(\uu)=-\pi\circ\tau(\uu)\subseteq\mathcal{Z}_{\uu}(W) $. So by the fact that $ \pi $ is a continuous module homomorphism, $ \pi\circ\delta$ is a continuous central derivation from $ \uu $ into $ W $. According to Lemma \ref{central derivation} $ \pi\circ\delta=0 $, and hence $\delta(\uu)\subseteq X $. Now from assumption and the obtained result, we have $ \tau(\uu)\subseteq X\cap\mathcal{Z}_{\uu}(Y)=\mathcal{Z}_{\uu}(X)$.
\end{proof}
According to Theorem \ref{equivalent phi amenable}, especially in the case where the Banach algebra $ \uu $ is $ \phi $-Johnson amenable, Lemmas \ref{central derivation} and \ref{maps into submodule} are established.
%%*************************************************************************************************************
%%*************************************************************************************************************
\section{Jordan derivations}
In this section, we study the continuous Jordan derivations of $ \phi $-Johnson amenable Banach algebras into special Banach modules. The following theorem is the main result of this section.
\begin{thm}\label{jordan}
Let $ \uu $ be a $ \phi $-Johnson amenable Banach algebra and $ X\in\mathcal{M}^{\uu}_{\phi_{r}}\cup\mathcal{M}^{\uu}_{\phi_{l}} $. Then every continuous Jordan derivation from $ \uu $ to $ X $ is a derivation.
\end{thm}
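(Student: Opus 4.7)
My plan is to use the bounded net $\{e_\lambda\}\subset\uu$ supplied by Theorem \ref{equivalent phi amenable}(iv), together with the standard Jordan derivation identity $\delta(aba)=\delta(a)ba+a\delta(b)a+ab\delta(a)$, to show that $\delta$ coincides with an inner derivation $\uu\to X^{**}$ implemented by a $w^{*}$-cluster point $M$ of $\{\delta(e_\lambda)\}$. Since every inner derivation is a derivation, this will force $\delta$ itself to be a derivation. I carry out the argument assuming $X\in\mathcal{M}^{\uu}_{\phi_{l}}$; the case $X\in\mathcal{M}^{\uu}_{\phi_{r}}$ is handled symmetrically, with left and right actions swapped throughout.

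Fix $\{e_\lambda\}$ as above: bounded, with $\phi(e_\lambda)=1$, $ae_\lambda-\phi(a)e_\lambda\to 0$ and $e_\lambda a-\phi(a)e_\lambda\to 0$ for all $a\in\uu$; in particular $e_\lambda^{2}-e_\lambda\to 0$. Setting $a=e_\lambda$ in the Jordan triple identity and using the left-module simplifications $e_\lambda\cdot x=x$ and $(e_\lambda b)\cdot x=\phi(b)x$ (valid because $X\in\mathcal{M}^{\uu}_{\phi_{l}}$) gives
\[\delta(e_\lambda be_\lambda)=\delta(e_\lambda)be_\lambda+\delta(b)e_\lambda+\phi(b)\delta(e_\lambda).\]
Taking the limit with the aid of continuity of $\delta$ and of the module operations yields $\phi(b)\delta(e_\lambda)e_\lambda+\delta(b)e_\lambda\to 0$. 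Separately, applying the Jordan derivation property to $e_\lambda\circ e_\lambda$ gives $\delta(e_\lambda^{2})=\delta(e_\lambda)e_\lambda+\delta(e_\lambda)$, which combined with $\delta(e_\lambda^{2})-\delta(e_\lambda)\to 0$ implies $\delta(e_\lambda)e_\lambda\to 0$. Substituting back produces the crucial convergence $\delta(b)e_\lambda\to 0$ for every $b\in\uu$.

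Next, apply the Jordan derivation property to $b\circ e_\lambda$; after simplifying the left $\phi$-actions this becomes
\[\delta(be_\lambda)+\delta(e_\lambda b)=\delta(b)e_\lambda+\delta(b)+\delta(e_\lambda)b+\phi(b)\delta(e_\lambda).\]
Passing to the limit, using $\delta(be_\lambda),\delta(e_\lambda b)\to\phi(b)\delta(e_\lambda)$ together with $\delta(b)e_\lambda\to 0$, gives
\[\phi(b)\delta(e_\lambda)-\delta(e_\lambda)b-\delta(b)\longrightarrow 0\qquad(b\in\uu).\]
Since $\{\delta(e_\lambda)\}$ is norm-bounded, pass to a subnet along which $\delta(e_\lambda)\to M$ in the $w^{*}$-topology of $X^{**}$. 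The $w^{*}$-continuity of the right action of $\uu$ on $X^{**}$, together with the natural extension of the left $\phi$-action to $X^{**}$, then yields $\delta(b)=\phi(b)M-M\cdot b=b\cdot M-M\cdot b$ in $X^{**}$. The right-hand side is precisely the inner derivation $\uu\to X^{**}$ implemented by $M$, which is automatically a derivation; because its values lie in $X\subseteq X^{**}$, the required identity $\delta(bc)=\delta(b)c+b\delta(c)$ follows.

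The main obstacle I anticipate is establishing the intermediate convergence $\delta(b)e_\lambda\to 0$: securing it requires a delicate interlocking of the purely algebraic Jordan triple identity with the analytic ingredients provided by $\{e_\lambda\}$ — its boundedness, its asymptotic idempotence, and the one-sided $\phi$-module identity. Once this convergence is in hand, the identification of $\delta$ with an inner derivation into $X^{**}$ via $w^{*}$-compactness is a routine matter.
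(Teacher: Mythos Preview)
Your argument is correct and genuinely different from the paper's. Both proofs embed $X$ into $X^{**}$ and ultimately identify $\delta$ with an inner derivation implemented by a $w^{*}$-accumulation point of $\{\delta(e_\lambda)\}$, but the route to that identification differs substantially. The paper works with the symmetric net $\{e_\lambda\otimes e_\lambda\}$ in $\PT$, uses generalized limits to define an auxiliary map $\Delta(a)=\Omega\cdot a-a\cdot\Omega-\delta(a)$, shows $\Delta$ takes central values, and then \emph{iterates} the whole construction on $\Delta$ to obtain $\Delta_1=\Delta$, which forces $\Delta$ to be inner and hence $\delta$ to be inner. Your proof bypasses the tensor-product machinery and the iteration entirely: you exploit the Jordan triple identity $\delta(aba)=\delta(a)ba+a\delta(b)a+ab\delta(a)$ with $a=e_\lambda$ to extract the norm convergence $\delta(b)e_\lambda\to 0$, and then a single application of the Jordan identity on $b\circ e_\lambda$ yields $\phi(b)\delta(e_\lambda)-\delta(e_\lambda)b\to\delta(b)$, from which innerness follows by $w^{*}$-compactness. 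Your approach is shorter and more elementary; the paper's approach has the virtue of running in exact parallel with the Lie case (Theorem~\ref{Lie}), where the auxiliary map $\Delta$ becomes the centre-valued trace $\varphi$ and no triple identity is available.
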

\begin{proof}
 Suppose that $ X\in\mathcal{M}^{\uu}_{\phi_{r}} $ and $ \delta:\uu\rightarrow X $ is a continuous Jordan derivation. So $ X^{*}\in\mathcal{M}^{\uu}_{\phi_{l}} ,\quad X^{**}\in\mathcal{M}^{\uu}_{\phi_{r}}$. Viewing $ X $  as a closed $\uu$-subbimodule of $X^{**}$, and hence $ \delta $ is a continuous Jordan derivation from $ \uu $ to $ X^{**} $.\par
 According to Theorem \ref{equivalent phi amenable} there is a bounded net $ \lbrace e_{\lambda}\rbrace_{\lambda\in\Lambda} $ in $\uu$
 such that 
\begin{equation}\label{q1}
\| ae_{\lambda}-\phi(a)e_{\lambda}\|\longrightarrow 0 \quad \text{and} \quad \|e_{\lambda}a-\phi(a)e_{\lambda}\|\longrightarrow 0 
\end{equation}
for each $ a\in\uu $ and $ \phi(e_{\lambda})=1 $ for each $ \lambda\in\Lambda $. Define $ \Omega\in X^{**} $ as follows:
\[ \langle\Omega,f\rangle=Lim_{\lambda}\langle\delta(e_{\lambda}),f\rangle, \]
 where $f\in X^{*}  $ and $ Lim_{\lambda} $ is a generalized limit on $ \Lambda $. From proof of Theorem \ref{equivalent phi amenable}, $\lbrace e_{\lambda}\otimes e_{\lambda}\rbrace _{\lambda\in\Lambda} $ is a symmetric bounded net in $\PT$ such that 
 \begin{equation}\label{q2}
  ae_{\lambda}\otimes e_{\lambda}-e_{\lambda}\otimes e_{\lambda}a\longrightarrow 0 \quad \text{and} \quad \phi(\pi(e_{\lambda}\otimes e_{\lambda}))=1
 \end{equation}
 for each $a\in \uu$. Since the flip map on $ \PT $ is continuous, it follows that 
 \begin{equation}\label{q3}
   e_{\lambda}\otimes ae_{\lambda}-e_{\lambda}a\otimes e_{\lambda}\longrightarrow 0 
 \end{equation}
for each $a\in \uu$. Define the continuous bilinear map from $ \uu \times \uu$ to $ X $ by $ (a,b)\mapsto \phi(a)\delta(b) $, and therefore there is a continuous linear map $ \Psi : \PT\rightarrow X $ such that $ \Psi (a\otimes b)=\phi(a)\delta(b) $ for each $ a,b\in\uu$. According to \eqref{q2} and \eqref{q3} for each $ a\in\uu $ and $  f\in X^{*}$ we have
 \[ Lim_{\lambda}\langle\Psi(ae_{\lambda}\otimes e_{\lambda}),f\rangle=Lim_{\lambda}\langle\Psi(e_{\lambda}\otimes e_{\lambda}a),f\rangle \]
 and
 \[  Lim_{\lambda}\langle\Psi(e_{\lambda}\otimes ae_{\lambda}),f\rangle=Lim_{\lambda}\langle\Psi(e_{\lambda}a\otimes e_{\lambda}),f\rangle . \]
 So by the fact that $ \phi(e_{\lambda})=1 $, we see that 
 \begin{equation}\label{q4}
 Lim_{\lambda}\langle\phi (a)\delta (e_{\lambda}),f\rangle=Lim_{\lambda}\langle\delta(e_{\lambda}a).f\rangle
 \end{equation}
 and
 \begin{equation}\label{q5}
 Lim_{\lambda}\langle\delta(ae_{\lambda}),f\rangle=Lim_{\lambda}\langle\phi(a)\delta(e_{\lambda}),f\rangle 
 \end{equation}
 for each $ a\in\uu $ and $  f\in X^{*}$. Now according to \eqref{q4} and \eqref{q5} for each $ a\in\uu$ and $f\in X^{*} $ we have
 \begin{equation*}
 \begin{split}
 \langle\phi(a)\Omega,f\rangle &=\langle\Omega.a,f\rangle \\&
 =\langle\Omega,a.f\rangle \\&
 =Lim_{\lambda}\langle\delta(e_{\lambda}).a,f\rangle\\&
 =Lim_{\lambda}\langle\phi(a)\delta(e_{\lambda}),f\rangle \\&
 =Lim_{\lambda}\langle\delta(ae_{\lambda}),f\rangle \\&
 =Lim_{\lambda}\langle a.\delta(e_{\lambda})+\delta(a).e_{\lambda}+\delta(e_{\lambda}).a+e_{\lambda}.\delta(a)-\delta(e_{\lambda}a),f\rangle \\&
 =Lim_{\lambda}\langle a.\delta(e_{\lambda})+\phi(e_{\lambda})\delta(a)+\phi(a)\delta(e_{\lambda})+e_{\lambda}.\delta(a)-\delta(e_{\lambda}a),f\rangle \\&
 =Lim_{\lambda}\langle a.\Omega+\delta(a)+\Delta(a),f\rangle,
 \end{split}
 \end{equation*}
 where $ \Delta:\uu\rightarrow X^{**} $ is a continuous linear map defined by
\[\langle\Delta(a),f\rangle=Lim_{\lambda}\langle e_{\lambda}.\delta(a),f\rangle\quad\quad (a\in\uu, f\in X^{*} ).\]
So
\begin{equation}\label{q6}
\Delta(a)=\Omega.a-a.\Omega-\delta(a)
\end{equation}
for each $a\in\uu$, and hence $ \Delta $ is a Jordan derivation. By \eqref{q1} we get
\begin{equation*}
 \begin{split}
 \langle a.\Delta(b),f\rangle &=Lim_{\lambda}\langle ae_{\lambda}.\delta(b),f\rangle \\&
 =Lim_{\lambda}\langle \phi(a)e_{\lambda}.\delta(b),f\rangle \\&
 =\langle\phi(a)\Delta(b),f\rangle \\&
 =\langle\Delta(b).a,f\rangle
 \end{split}
 \end{equation*}
for each $ a,b\in\uu$ and $f\in X^{*} $. Thus
\begin{equation}\label{q7}
a.\Delta(b)=\Delta(b).a=\phi(a)\Delta(b) 
\end{equation} 
 for each $ a,b\in \uu $. Now we do the same process for $ \Delta $ as we did earlier for $ \delta $ and therefore
 \begin{equation}\label{q8}
   \Delta(a)=\Omega_{1}.a-a.\Omega_{1}-\Delta_{1}(a) \quad \quad (a\in\uu),
 \end{equation}
where $ \Omega_{1}\in X $ and $ \Delta_{1} $ is a continuous linear map from $ \uu $ to $ X^{**} $ defined by
\[  \langle\Delta_{1}(a),f\rangle=Lim_{\lambda}\langle e_{\lambda}.\Delta(a),f\rangle \quad\quad (a\in\uu, f\in X^{*} ).\]
From \eqref{q7} it follows that 
\begin{equation*}
 \begin{split}
 \langle\Delta_{1}(a),f\rangle &=Lim_{\lambda}\langle\phi(e_{\lambda})\Delta(a),f\rangle \\&
 = Lim_{\lambda}\langle\Delta(a),f\rangle \\&
 =\langle\Delta(a),f\rangle 
  \end{split}
 \end{equation*}
 for each $a\in\uu$ and $ f\in X^{*}$. So $\Delta_{1}=\Delta$, and from \eqref{q8} we arrive at 
  \[ \Delta(a)=(\dfrac{1}{2}\Omega_{1}).a-a.(\dfrac{1}{2}\Omega_{1})\quad \quad (a\in\uu). \]
 According to this identity and \eqref{q6} we have
 \[\delta(a)=(\Omega-\dfrac{1}{2}\Omega_{1}).a-a.(\Omega-\dfrac{1}{2}\Omega_{1})\quad \quad(a\in\uu),\]
and hence  $ \delta $ is a derivation. \par 
 If $ X\in\mathcal{M}^{\uu}_{\phi_{l}} $, it is proved similarly.
\end{proof}
%%______________________________________________________________________________________________________________
From Theorem \ref{equivalent phi amenable} and the definition of character amenability, it is clear that if $ \uu $ is a character ameneble Banach algebra, then $ \uu $ is $ \phi $-Johnson amenable for every $ \phi\in\Delta(\uu) $. Considering this point and Theorem \ref{jordan}, we have the following result.
\begin{cor}\label{Jordan character amenable}
Suppose that $ \uu$ is a character amenable Banach algebra. Then for each $ \phi\in\Delta(\uu) $ and $ X\in\mathcal{M}^{\uu}_{\phi_{r}}\cup\mathcal{M}^{\uu}_{\phi_{l}} $, every continuous Jordan derivation from $ \uu $ to $ X $ is a derivation.
\end{cor}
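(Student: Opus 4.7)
The plan is to derive this corollary directly from Theorem \ref{jordan} by verifying that the hypothesis of character amenability already forces $\uu$ to be $\phi$-Johnson amenable for every $\phi\in\Delta(\uu)$; once that reduction is in place, there is nothing left to do. The remark immediately following Theorem \ref{equivalent phi amenable} in fact asserts exactly this implication, so the proof amounts to invoking that remark plus the main theorem of the section.

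More concretely, I would proceed as follows. Fix $\phi\in\Delta(\uu)$. From the definition of character amenability recalled in the Preliminaries, $H^{1}(\uu,Y^{*})=\lbrace 0\rbrace$ for every $Y\in\mathcal{M}^{\uu}_{\phi_{r}}\cup\mathcal{M}^{\uu}_{\phi_{l}}$. By the characterizations of right and left $\phi$-amenability from \cite[Theorem 1.1]{kaniuth} quoted in Section~2, this vanishing of the first cohomology with coefficients in dual modules of the two relevant module classes is equivalent to $\uu$ being both right and left $\phi$-amenable. Hence condition (iii) of Theorem \ref{equivalent phi amenable} holds, and the equivalence (i)$\Leftrightarrow$(iii) then yields that $\uu$ is $\phi$-Johnson amenable.

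With $\phi$-Johnson amenability established for every $\phi\in\Delta(\uu)$, Theorem \ref{jordan} applies verbatim: for any $X\in\mathcal{M}^{\uu}_{\phi_{r}}\cup\mathcal{M}^{\uu}_{\phi_{l}}$, every continuous Jordan derivation $\delta:\uu\to X$ is a derivation. Since $\phi$ was arbitrary in $\Delta(\uu)$, the conclusion follows.

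I do not expect any technical obstacle here; all of the work has been done in Theorem \ref{equivalent phi amenable} (supplying the bridge between the ``amenability of $\uu$ on one-dimensional-type modules'' formulations) and in Theorem \ref{jordan} (handling the Jordan-to-derivation step). The corollary is essentially a labelling: character amenability is a uniform-in-$\phi$ strengthening of the $\phi$-Johnson hypothesis, so the previous theorem applies uniformly in $\phi$.
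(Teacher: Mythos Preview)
Your proposal is correct and matches the paper's own argument essentially verbatim: the paper simply notes (in the sentence preceding the corollary and in the remark after Theorem~\ref{equivalent phi amenable}) that character amenability implies $\phi$-Johnson amenability for every $\phi\in\Delta(\uu)$, and then invokes Theorem~\ref{jordan}. Your slightly more explicit routing through condition (iii) of Theorem~\ref{equivalent phi amenable} is exactly the content of that remark.
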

%%________________________________________________________________________________________________________________
Examples of character amenable Banach algebras are presented in \cite{sangani}. We also know that every amenable Banach algebra is character amenable. Therefore, we have the following result.
\begin{cor}\label{Jordan amenable}
Suppose that the Banach algebra $ \uu $ is amenable. Then for each $ \phi\in\Delta(\uu) $ and $ X\in\mathcal{M}^{\uu}_{\phi_{r}}\cup\mathcal{M}^{\uu}_{\phi_{l}} $, every continuous Jordan derivation from $ \uu $ to $ X $ is a derivation.
\end{cor}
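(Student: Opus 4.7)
The proof is a direct reduction with essentially no technical content of its own; the plan is to derive the corollary from Corollary~\ref{Jordan character amenable} via the standard implication that every amenable Banach algebra is character amenable.

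First I would observe that if $\uu$ is amenable, then $H^{1}(\uu,X^{*})=\{0\}$ for \emph{every} Banach $\uu$-bimodule $X$. Specializing this to those $X$ in $\mathcal{M}^{\uu}_{\phi_{r}}\cup\mathcal{M}^{\uu}_{\phi_{l}}$ as $\phi$ ranges over $\Delta(\uu)\cup\{0\}$ matches the definition of character amenability recalled in Section~2, so $\uu$ is character amenable. This implication is well known and is the fact already invoked in the paragraph preceding the statement; I would include it as a single sentence of justification rather than reprove it.

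Next, for an arbitrary fixed $\phi\in\Delta(\uu)$ and $X\in\mathcal{M}^{\uu}_{\phi_{r}}\cup\mathcal{M}^{\uu}_{\phi_{l}}$, I would simply invoke Corollary~\ref{Jordan character amenable} applied to the character amenable algebra $\uu$ to conclude that every continuous Jordan derivation $\delta\colon\uu\to X$ is a derivation. This closes the argument.

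There is no genuine obstacle here: the entire proof is the chain
\[
\text{amenable}\ \Longrightarrow\ \text{character amenable}\ \Longrightarrow\ \phi\text{-Johnson amenable for every }\phi\in\Delta(\uu),
\]
where the second implication is the remark following Theorem~\ref{equivalent phi amenable}. The only care required is to match the hypothesis of Corollary~\ref{Jordan character amenable} (which demands both conditions $\phi\in\Delta(\uu)$ and the module class $\mathcal{M}^{\uu}_{\phi_{r}}\cup\mathcal{M}^{\uu}_{\phi_{l}}$) verbatim, which is immediate. No new analytic or algebraic ingredient beyond Theorems~\ref{equivalent phi amenable} and~\ref{jordan} is needed.
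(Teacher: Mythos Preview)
Your proposal is correct and follows exactly the paper's own route: the paper simply notes that every amenable Banach algebra is character amenable and then invokes Corollary~\ref{Jordan character amenable}. There is nothing to add.
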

Suppose that $ \phi\in\Delta(\uu) $ and $ \mathbb{C}\in\mathcal{SM}^{\uu}_{\phi} $. In this case, a derivation $\delta:\uu\rightarrow\mathbb{C}  $ is called \textit{point derivation} at $ \phi $ and a Jordan derivation $\delta:\uu\rightarrow\mathbb{C}  $ is called \textit{point Jordan derivation} at $ \phi $. Indeed, if $ \delta $ is a point derivation at $ \phi $, then 
\[ \delta(ab)=\phi(a)\delta(b)+\phi(b)\delta(a)\quad\quad (a,b\in\uu), \]
and if $\delta$ is a point Jordan derivation at $ \phi $ then
\[ \delta(a^{2})=2\phi(a)\delta(a)\quad(a\in\uu) .\]
Considering that $ \mathcal{SM}^{\uu}_{\phi}=\mathcal{M}^{\uu}_{\phi_{r}}\cap\mathcal{M}^{\uu}_{\phi_{l}} $, and in view of Lemma \ref{central derivation} and the previous results, we have the following result.
%%________________________________________________________________________________________________________________________
\begin{cor}\label{point Jordan der}
Let $\uu$ be a Banach algebra.
\begin{itemize}
\item[(i)]
If $\uu $ is $ \phi$-Johnson amenable, then every continuous point Jordan derivation at $ \phi $ on $ \uu $ is equal to zero.
\item[(ii)]
If $ \uu $ is character amenable, then for each $\phi\in\Delta(\uu)$ every continuous point Jordan derivation at $ \phi $ on $ \uu $ is equal to zero.
\item[(iii)]
If $ \uu $ is amenable, then for each $\phi\in\Delta(\uu)$ every continuous point Jordan derivation at $ \phi $ on $ \uu $ is equal to zero.
\end{itemize}
\end{cor}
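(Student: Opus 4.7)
The plan is to reduce each of the three statements to a short combination of results already established in the excerpt. The starting observation is that, when $\mathbb{C}\in\mathcal{SM}^{\uu}_{\phi}$, the centre $\mathcal{Z}_{\uu}(\mathbb{C})$ equals the whole of $\mathbb{C}$, because $a\cdot z=\phi(a)z=z\cdot a$ for all $a\in\uu$ and $z\in\mathbb{C}$. So any linear map $\delta:\uu\to\mathbb{C}$ is automatically centre-valued.

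For part (i), I would begin with a continuous point Jordan derivation $\delta:\uu\to\mathbb{C}$ at $\phi$. Since $\mathbb{C}\in\mathcal{M}^{\uu}_{\phi_r}\cup\mathcal{M}^{\uu}_{\phi_l}$ and $\uu$ is $\phi$-Johnson amenable, Theorem \ref{jordan} upgrades $\delta$ to a continuous derivation. By the observation above, this derivation takes values in $\mathcal{Z}_{\uu}(\mathbb{C})$, so it is a continuous central derivation. Now Theorem \ref{equivalent phi amenable} (i)$\Leftrightarrow$(iii) tells us that $\uu$ is in particular left (and right) $\phi$-amenable, and therefore Lemma \ref{central derivation} forces $\delta=0$.

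Parts (ii) and (iii) are immediate consequences of (i) via the two implications already acknowledged in the excerpt: character amenability of $\uu$ implies $\phi$-Johnson amenability for every $\phi\in\Delta(\uu)$ (from the remark following Theorem \ref{equivalent phi amenable}), and amenability implies character amenability (stated just before Corollary \ref{Jordan amenable}). Thus (iii)$\Rightarrow$(ii)$\Rightarrow$(i) for the hypotheses, and the conclusion from (i) propagates.

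There is no genuine obstacle here; the whole point is that the heavy lifting has already been done in Theorem \ref{jordan} and Lemma \ref{central derivation}. The only subtlety worth flagging in the write-up is the dual role of $\mathbb{C}$: it is simultaneously the target module and its own centre, which is precisely what lets the Jordan-to-derivation passage and the central-derivation vanishing chain together. No further case analysis between $\mathcal{M}^{\uu}_{\phi_r}$ and $\mathcal{M}^{\uu}_{\phi_l}$ is needed, since $\mathbb{C}\in\mathcal{SM}^{\uu}_{\phi}$ lies in both.
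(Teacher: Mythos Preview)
Your proposal is correct and follows essentially the same route as the paper: invoke Theorem~\ref{jordan} (or its corollaries) to turn the point Jordan derivation into a derivation, observe it is central because $\mathcal{Z}_{\uu}(\mathbb{C})=\mathbb{C}$, and then kill it with Lemma~\ref{central derivation}. The paper's own justification is just the one-line remark ``in view of Lemma~\ref{central derivation} and the previous results,'' which your write-up unpacks accurately, including the passage through Theorem~\ref{equivalent phi amenable} to obtain one-sided $\phi$-amenability.
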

This result actually gives us a necessary condition for $ \phi$-Johnson amenability, character amenability and amenability based on point Jordan derivations.
%%____________________________________________________________________________________________________________________
\par 
The following example shows that the $ \phi $-Johnson amenability condition cannot be dropped from Theorem \ref{jordan}.
\begin{exam}\label{example jordan der}
We consider the Banach algebra $ \uu $ and the character $ \phi \in\Delta(\uu)$ as follows
\[\uu=\left\lbrace\begin{bmatrix}
0&a\\
0&b 
\end{bmatrix}
\mid a,b\in\mathbb{C}\right\rbrace \quad  \text{and} \quad  \phi(\begin{bmatrix}
0&a\\
0&b 
\end{bmatrix})=b . 
\]
The element
$ E=\begin{bmatrix}
0&0\\
0&1
\end{bmatrix} \in\uu $
is a right identity of $ \uu $ and for every 
$A= \begin{bmatrix}
0&a\\
0&b
\end{bmatrix} \in\uu $ we have
\[ EA=\phi(A)E\quad  \text{and} \quad \phi(E)=1 \]
Therefore $ \uu $ is left $ \phi$-amenable. According to \cite[Example 2.5]{sahami}, $ \uu $ is not $ \phi$-Johnson amenable. Now we turn $ X:=\mathbb{C} $ into a Banach $ \uu $-bimodule by the module actions
\[ A.\lambda=\phi(A)\lambda \quad  \text{and} \quad  \lambda.A=0\quad\quad (A\in\uu,\lambda\in\mathbb{C}).\]
In this case $ X\in\mathcal{M}^{\uu}_{\phi_{l}} $. Define the linear map $ \delta:\uu\rightarrow X $ by
\[
 \delta(\begin{bmatrix}
0&a\\
0&b
\end{bmatrix})=a\quad\quad (\begin{bmatrix}
0&a\\
0&b
\end{bmatrix}\in\uu) .\]
A routine calculation shows that $ \delta $ is a Jordan derivation, which is not a derivation.
\par 
In the continuation of the example, we consider the Banach algebra $ \mathcal{G}$ and $ \phi \in\Delta(\mathcal{G})$ as follows
\[\mathcal{G}=\left\lbrace\begin{bmatrix}
a&b\\
0&0
\end{bmatrix}\mid a,b\in\mathbb{C} \right\rbrace \quad  \text{and} \quad \phi( \begin{bmatrix}
a&b\\
0&0
\end{bmatrix})=a . \]
Then $ \mathcal{G} $ is a right $ \phi $-amenable Banach algebra.
By the same reasoning as \cite[Example 2.5 ]{sahami}, $ \mathcal{G} $ is not $ \phi $-Johnson amenable.
$ Y:=\mathbb{C} $ is a Banach $ \mathcal{G} $-bimodule by the module actions 
\[A.\lambda=0\quad  \text{and} \quad \lambda.A=\phi(A)\lambda \quad\quad(A\in\mathcal{G},\lambda\in\mathbb{C}), \]
and hence $ Y\in\mathcal{M}^{\mathcal{G}}_{\phi_{r}} $.
The linear map
$ \delta:\mathcal{G}\rightarrow X $
defined by
\[
 \delta(\begin{bmatrix}
a&b\\
0&0
\end{bmatrix} )=b\quad\quad  (\begin{bmatrix}
a&b\\
0&0
\end{bmatrix}\in\mathcal{G})
\]
is a Jordan derivation, which is not derivation.
\end{exam}
This example not only shows that $ \phi $-Johnson amenability condition cannot be dropped, but also shows that the condition cannot be replaced by the weaker condition left or right $ \phi$-amenabiliy.
%%*************************************************************************************************************
%%*************************************************************************************************************
%%*************************************************************************************************************
\section{Lie derivations}
In this section, we study continuous Lie derivations on $ \phi$-Johnson amenable Banach algebras. The following theorem is the main result of this section.
\begin{thm}\label{Lie} 
Let $ \uu $ be a $ \phi$-Johnson amenable Banach algebra where $\phi \in\Delta(\uu)$, and let $ X\in \mathcal{M}^{\uu}_{\phi_{r}}\cup\mathcal{M}^{\uu}_{\phi_{l}} $.
Suppose that $ \delta:\uu\rightarrow X $ is a continuous Lie derivation. Then there exist a continuous derivation $ d:\uu\rightarrow X $ and a continuous center-valued trace $\varphi:\uu\rightarrow\mathcal{Z}_{\uu}(X)  $ such that $ \delta=d+\varphi $.
\end{thm}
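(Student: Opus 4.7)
The plan is to run the construction from the proof of Theorem~\ref{jordan} almost verbatim, using the Lie identity in place of the Jordan one, and then to invoke Lemma~\ref{maps into submodule} to push the resulting decomposition of $\delta$ from $X^{**}$ back into $X$. I will handle the case $X\in\mathcal{M}^{\uu}_{\phi_{r}}$ explicitly and leave $X\in\mathcal{M}^{\uu}_{\phi_{l}}$ to symmetry. View $X$ as a closed $\uu$-subbimodule of $X^{**}\in\mathcal{M}^{\uu}_{\phi_{r}}$, fix the bounded net $\lbrace e_{\lambda}\rbrace_{\lambda\in\Lambda}$ supplied by Theorem~\ref{equivalent phi amenable}(iv), and define $\Omega\in X^{**}$ together with a continuous linear map $\Delta:\uu\rightarrow X^{**}$ exactly as in the proof of Theorem~\ref{jordan}, namely by
\[\langle\Omega,f\rangle=Lim_{\lambda}\langle\delta(e_{\lambda}),f\rangle\quad\text{and}\quad \langle\Delta(a),f\rangle=Lim_{\lambda}\langle e_{\lambda}.\delta(a),f\rangle\quad(f\in X^{*}).\]

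Next, apply the Lie identity to $[a,e_{\lambda}]=ae_{\lambda}-e_{\lambda}a$ to obtain
\[\delta(ae_{\lambda})-\delta(e_{\lambda}a)=\delta(a)e_{\lambda}-e_{\lambda}\delta(a)+a\delta(e_{\lambda})-\delta(e_{\lambda})a.\]
The symmetric-tensor bilinear-map argument that produced \eqref{q4} and \eqref{q5} in the proof of Theorem~\ref{jordan} never invokes the Jordan identity and so applies here without change to yield $Lim_{\lambda}\langle\delta(ae_{\lambda}),f\rangle=Lim_{\lambda}\langle\delta(e_{\lambda}a),f\rangle=\phi(a)\langle\Omega,f\rangle$; hence the left-hand side vanishes under $Lim_{\lambda}\langle\,\cdot\,,f\rangle$. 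Taking generalised limits on the right-hand side, and using $\delta(a).e_{\lambda}=\phi(e_{\lambda})\delta(a)=\delta(a)$ in $X$ together with $\Omega.a=\phi(a)\Omega$ in $X^{**}$, I arrive at
\[\delta(a)=\Delta(a)+\Omega.a-a.\Omega\quad(a\in\uu).\]
Set $d(a):=\Omega.a-a.\Omega$; a routine verification using the bimodule axioms shows that $d:\uu\rightarrow X^{**}$ is a continuous derivation. Repeating the computation that produced \eqref{q7} in the Jordan proof, together with $ae_{\lambda}-\phi(a)e_{\lambda}\longrightarrow 0$, gives $a.\Delta(b)=\phi(a)\Delta(b)=\Delta(b).a$ for all $a,b\in\uu$, so $\Delta(\uu)\subseteq\mathcal{Z}_{\uu}(X^{**})$. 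Since $\delta$ is a Lie derivation and $d$ is a derivation, $\Delta=\delta-d$ is itself a Lie derivation; combined with the centrality just obtained one has $[\Delta(a),b]=[a,\Delta(b)]=0$, hence $\Delta([a,b])=0$, so $\Delta$ is a continuous centre-valued trace into $X^{**}$.

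It only remains to descend from $X^{**}$ to $X$. Applying Lemma~\ref{maps into submodule} with $Y=X^{**}$, the derivation $d$ in the role of the lemma's derivation and $\Delta$ in the role of $\tau$, the hypothesis $(d+\Delta)(\uu)=\delta(\uu)\subseteq X$ delivers $d(\uu)\subseteq X$ and $\Delta(\uu)\subseteq\mathcal{Z}_{\uu}(X)$; setting $\varphi:=\Delta$ produces the required decomposition $\delta=d+\varphi$. The main technical obstacle I anticipate is the bookkeeping in the first display above: one must check that the signs and the left/right module actions really do combine to give exactly the inner-type derivation $d(a)=\Omega.a-a.\Omega$ plus a remainder $\Delta(a)$ that lands in the centre of $X^{**}$, since both features are essential for the final appeal to Lemma~\ref{maps into submodule}. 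Once $\delta=d+\Delta$ is in place, however, the Lie hypothesis automatically forces the central remainder to be a trace, so no iteration analogous to the $\Omega_{1},\Delta_{1}$ step used in the Jordan proof is required.
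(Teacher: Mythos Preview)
Your proposal is correct and follows essentially the same route as the paper's proof: both embed $X$ in $X^{**}$, define $\Omega$ and the remainder map via generalised limits against the net of Theorem~\ref{equivalent phi amenable}(iv), use \eqref{q4}--\eqref{q5} together with the Lie identity on $[a,e_{\lambda}]$ to obtain $\delta(a)=\Omega.a-a.\Omega+\varphi(a)$ with $\varphi$ central-valued (the paper writes $\varphi$ where you write $\Delta$), and then invoke Lemma~\ref{maps into submodule} to descend from $X^{**}$ to $X$. The paper likewise performs no iteration analogous to the $\Omega_{1},\Delta_{1}$ step of the Jordan proof, exactly as you anticipated.
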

\begin{proof}
Assume that $ X\in\mathcal{M}^{\uu}_{\phi_{r}} $ and $ \delta:\uu\rightarrow X $ is a continuous Lie derivation. We consider $ \delta $ as a continuous Lie derivation from $ \uu $ to $ X^{**} $. As in the proof of Theorem \ref{jordan}, relations \eqref{q1} to \eqref{q5} are also valid here, and we also consider $ \Omega\in X^{**} $ as in the proof of Theorem \ref{jordan}. According to \eqref{q4} and \eqref{q5}, for each $a\in\uu , f\in X^{*} $ we have
\begin{equation*}
\begin{split} 
\langle\phi(a)\Omega,f\rangle=\langle\Omega.a,f\rangle &=\langle\Omega,a.f\rangle \\&
=\langle\Omega,a.f\rangle \\&
=Lim_{\lambda}\langle\delta(e_{\lambda}).a,f\rangle \\&
=Lim_{\lambda}\langle \delta(ae_{\lambda}),f\rangle \\&
=Lim_{\lambda}\langle a.\delta(e_{\lambda})+\delta(a)e_{\lambda}-e_{\lambda}.\delta(a)-\delta(e_{\lambda}).a+\delta(e_{\lambda}a),f\rangle \\&
=Lim_{\lambda}\langle a.\delta(e_{\lambda})+\delta(a)-e_{\lambda}.\delta(a)-\phi(a)\delta(e_{\lambda})+\delta(e_{\lambda}a),f\rangle\\&
=Lim_{\lambda}\langle a.\Omega+\delta(a)-\varphi(a),f\rangle,
\end{split}
\end{equation*}
where $ \varphi:\uu\rightarrow X^{**} $ is a linear map defined by
\[ \langle\varphi (a),f\rangle =Lim_{\lambda}\langle e_{\lambda}.\delta (a),f\rangle\quad\quad (a\in\uu , f\in X^{*}). \]
 So
\[ \delta(a)=\Omega.a-a.\Omega+\varphi(a)\quad\quad (a\in\uu). \]
The linear map $d:\uu\rightarrow X^{**}  $ defined by $d(a)=\Omega.a-a.\Omega $ is a continuous derivation, and therefore $\delta=d+\varphi $. Also, $  \varphi(a)\in\mathcal{Z}_{\uu}(X^{**})$ for every $ a\in\uu $ (the proof is the same as the proof of centrality of $ \Delta $ in the proof of Theorem \ref{jordan}). Now $ \varphi=\delta-d $ is a continuous Lie derivation, and from the fact that $ \varphi(\uu)\subseteq\mathcal{Z}_{\uu}(X^{**}) $, it follows that $  \varphi([a,b])=0$ for every $ a,b\in\uu $. The conditions of Lemma \ref{maps into submodule} hold for $ d$ and $\varphi $, hence $ d$ maps $\uu$ to $X $ and $ \varphi$ maps $\uu$ to $\mathcal{Z}_{\uu}(X) $.
\end{proof}
%%________________________________________________________________________________________________________
We have the following results from the above theorem.
\begin{cor}\label{Lie character amenable}
Let $ \uu $ be a character amenable Banach algebra. Then for each $ \phi\in\Delta(\uu) $ and $X\in\mathcal{M}^{\uu}_{\phi_{r}}\cup\mathcal{M}^{\uu}_{\phi_{l}} $, for every continuous Lie derivation $\delta:\uu\rightarrow X$, there exist a continuous derivation $ d:\uu\rightarrow X $ and a continuous center-valued trace $ \varphi:\uu\rightarrow\mathcal{Z}_{\uu}(X) $ such that $ \delta=d+\varphi $ for each $ a,b\in\uu $.
\end{cor}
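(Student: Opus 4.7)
The plan is to reduce the corollary directly to Theorem~\ref{Lie} by verifying that character amenability of $\uu$ forces $\uu$ to be $\phi$-Johnson amenable for every $\phi\in\Delta(\uu)$. Once that reduction is in place, the conclusion follows verbatim from Theorem~\ref{Lie} applied to the given $\phi$ and $X$; there is essentially no new analytic content beyond a bookkeeping step.

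First I would recall the definition of character amenability: $H^1(\uu,X^*)=\{0\}$ for every $\phi\in\Delta(\uu)\cup\{0\}$ and every $X\in\mathcal{M}^{\uu}_{\phi_{r}}\cup\mathcal{M}^{\uu}_{\phi_{l}}$. Fix $\phi\in\Delta(\uu)$. Restricting to modules in $\mathcal{M}^{\uu}_{\phi_{l}}$ and applying \cite[Theorem~1.1]{kaniuth} gives that $\uu$ is right $\phi$-amenable; restricting to modules in $\mathcal{M}^{\uu}_{\phi_{r}}$ and applying the corresponding left version shows $\uu$ is left $\phi$-amenable. Thus the equivalent condition (iii) of Theorem~\ref{equivalent phi amenable} holds, and so by the equivalence (iii)$\Leftrightarrow$(i) we conclude that $\uu$ is $\phi$-Johnson amenable. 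The observation between Lemma~\ref{central derivation} and Theorem~\ref{jordan} already records this implication, so I would simply cite it rather than reprove it.

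With $\phi$-Johnson amenability in hand for the given $\phi$, I would then invoke Theorem~\ref{Lie} with the same $\uu$, $\phi$, and $X\in\mathcal{M}^{\uu}_{\phi_{r}}\cup\mathcal{M}^{\uu}_{\phi_{l}}$. That theorem delivers a continuous derivation $d:\uu\rightarrow X$ and a continuous center-valued trace $\varphi:\uu\rightarrow\mathcal{Z}_{\uu}(X)$ with $\delta=d+\varphi$, which is precisely the desired decomposition. Since Theorem~\ref{Lie} already takes the module class $X\in\mathcal{M}^{\uu}_{\phi_{r}}\cup\mathcal{M}^{\uu}_{\phi_{l}}$ as its hypothesis, no further case distinction is needed.

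There is no real obstacle here: the only thing to check is that the translation from ``character amenable'' to ``$\phi$-Johnson amenable for every $\phi$'' is correctly mediated by Theorem~\ref{equivalent phi amenable} and \cite[Theorem~1.1]{kaniuth} (together with its left analogue). One minor care point is to remember that character amenability covers \emph{all} characters uniformly, so the corollary genuinely yields the decomposition for each $\phi\in\Delta(\uu)$, not just a single one; this is visible from the quantifier structure in the statement and requires no extra work.
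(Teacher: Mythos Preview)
Your proposal is correct and follows exactly the paper's approach: the paper states the corollary as an immediate consequence of Theorem~\ref{Lie}, relying on the observation (recorded after Theorem~\ref{equivalent phi amenable} and again before Corollary~\ref{Jordan character amenable}) that character amenability of $\uu$ forces $\phi$-Johnson amenability for every $\phi\in\Delta(\uu)$. Your explicit unpacking of this step via condition~(iii) of Theorem~\ref{equivalent phi amenable} and \cite[Theorem~1.1]{kaniuth} is precisely the justification the paper leaves implicit.
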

%%_________________________________________________________________________________________________________
\begin{cor}\label{Lie amenable}
Let $ \uu $ be a amenable Banach algebra. Then for each $ \phi\in\Delta(\uu) $ and $X\in\mathcal{M}^{\uu}_{\phi_{r}}\cup\mathcal{M}^{\uu}_{\phi_{l}} $, for every continuous Lie derivation $\delta:\uu\rightarrow X$, there exist a continuous derivation $ d:\uu\rightarrow X $ and a continuous center-valued trace $ \varphi:\uu\rightarrow\mathcal{Z}_{\uu}(X) $ such that $ \delta=d+\varphi $ for each $ a,b\in\uu $.
\end{cor}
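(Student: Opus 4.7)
The plan is to derive this as an immediate corollary of Theorem~\ref{Lie} by verifying that amenability is strong enough to imply $\phi$-Johnson amenability for every $\phi\in\Delta(\uu)$. Since the authors have already drawn essentially this chain of implications to obtain Corollary~\ref{Lie character amenable} from Theorem~\ref{Lie}, the proof here will be a one-line reduction rather than a new argument.

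First, I would recall the standard fact, already invoked in the paper just before Corollary~\ref{Jordan amenable}, that every amenable Banach algebra is character amenable. This is because if $\uu$ is amenable, then $H^{1}(\uu,X^{*})=\{0\}$ for every Banach $\uu$-bimodule $X$, which is a strictly stronger condition than the requirement in the definition of character amenability (namely, that $H^{1}(\uu,X^{*})=\{0\}$ only for $X\in\mathcal{M}^{\uu}_{\phi_{r}}\cup\mathcal{M}^{\uu}_{\phi_{l}}$ with $\phi\in\Delta(\uu)\cup\{0\}$).

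Next, I would fix $\phi\in\Delta(\uu)$ and observe that character amenability of $\uu$ implies $\phi$-Johnson amenability. This was already remarked in the paragraph directly following Theorem~\ref{equivalent phi amenable}: character amenability gives right and left $\phi$-amenability, which by the equivalence (i)$\Leftrightarrow$(iii) of Theorem~\ref{equivalent phi amenable} is exactly $\phi$-Johnson amenability of $\uu$.

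Finally, given a continuous Lie derivation $\delta:\uu\rightarrow X$ with $X\in\mathcal{M}^{\uu}_{\phi_{r}}\cup\mathcal{M}^{\uu}_{\phi_{l}}$, the hypotheses of Theorem~\ref{Lie} are satisfied, so there exist a continuous derivation $d:\uu\rightarrow X$ and a continuous centre-valued trace $\varphi:\uu\rightarrow\mathcal{Z}_{\uu}(X)$ with $\delta=d+\varphi$. There is no genuine obstacle here; the only point to check is that the standard implication \emph{amenable} $\Rightarrow$ \emph{character amenable} has indeed been recorded earlier in the paper (which it has), so that no external citation is needed.
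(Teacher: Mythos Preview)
Your proposal is correct and follows exactly the intended route: the paper does not give a separate proof for this corollary, but it is meant to be deduced immediately from Theorem~\ref{Lie} via the chain \emph{amenable} $\Rightarrow$ \emph{character amenable} $\Rightarrow$ \emph{$\phi$-Johnson amenable}, precisely as you have written.
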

%____________________________________________________________________________________________________________
Let $\mathbb{C}\in\mathcal{SM}^{\uu}_{\phi} $. In this case, like the point Jordan derivation, we can also define the point Lie derivation $ \delta:\uu\rightarrow\mathbb{C} $ at $ \phi $. So the linear map $ \delta:\uu\rightarrow\mathbb{C} $ is a point Lie derivation at $ \phi $ if and only if $ \delta([a,b])=0 $ for every $ a,b\in\uu $. Therefore, if we put $ V=\overline{span\lbrace[a,b]\mid a,b\in\uu\rbrace} $, then the set of all continuous point derivations at $ \phi $ on $ \uu $ is equal to $ V^{\perp} $. Unlike point Jordan derivations, the existence of a non-zero continuous point Lie derivation at $ \phi $ on $\uu $, does not result $ \phi$-Johnson non-amenability of $ \uu $. For example if $ \uu\neq 0$ is right (or left) $ \phi $-amenable commutative Banach algebra, according to Theorem \ref{equivalent phi amenable}, it is clear that $\uu $ is $ \phi $-Johnson amenable, and also the closed linear subspace $ V $ of $ \uu $ as defined above is $ V=0 $. So $ V^{\perp}=\uu^{*} $ and therefore $ \uu $ has non-zero continuous point Lie derivations at $ \phi $. An example of $ \phi $-Johnson amenable commutative Banach algebra is as follows. Suppose that $\mathbb{D} $ is the open unit disk, and $ A^{+}(\overline{\mathbb{D}}) $ is the set of all functions $f=\sum^{\infty}_{n=0}c_{n}Z^{n}  $ in the disc algebra $A(\overline{\mathbb{D}}) $ which have an absolutely convergent Taylor expansion on $\overline{\mathbb{D}}$. With respect to pointwise operations and the norm $ \Vert\sum^{\infty}_{n=0}c_{n}Z^{n}\Vert=\sum^{\infty}_{n=0}\vert c_{n}\vert $, $A^{+}(\overline{\mathbb{D}}) $ is a commutative Banach algebra. The map $z\rightarrow \varphi_{z}  $ where $ \varphi_{z} $ is a point evaluation at $ z $, is a character on $ A^{+}(\mathbb{D}) $. If $ \vert z\vert=1 $, then $A^{+}(\overline{\mathbb{D}}) $ is (right) $ \varphi_{z} $-amenable, and hence $A^{+}(\overline{\mathbb{D}}) $ is $ \varphi_{z} $-Johnson amenable (see \cite[Example 2.5]{kaniuth}).
%%_________________________________________________________________________________________________________________
\par 
The following example shows that the condition $ \phi $-Johnson amenability from Theorem \ref{Lie} cannot be removed or replaced by a weaker condition right or left $ \phi $-amenability.
\begin{exam}\label{example Lie der}
Consider the Banach algebras $ \uu $ and $ \mathcal{G} $ defined in Example \ref{example jordan der} and the character $ \phi $ defined on them.
\begin{itemize}
\item[$ (i)$]
We turn $ X:=\mathbb{C} $ into a Banach $ \uu $-bimodule by the module actions
\[ A.\lambda=0 \quad  \text{and} \quad \lambda.A=\phi(A)\lambda\quad \quad(A\in\uu,\lambda\in\mathbb{C}).\]
In this case $ X\in\mathcal{M}^{\uu}_{\phi_{r}} $ and $ \mathcal{Z}_{\uu}(X)=0 $. The linear map $ \delta:\uu\rightarrow X $ defined by 
$ \delta(\begin{bmatrix}
0&a\\
0&b
\end{bmatrix})=a$
is a Lie derivation which is not derivation. So, $ \delta $ is not as mentioned in Theorem \ref{Lie}.
\item[$(ii)$]
We turn $Y:=\mathbb{C} $ into a Banach $\mathcal{G}  $-bimodule by the module actions
\[\lambda.A=0\quad  \text{and} \quad A.\lambda=\phi(A).\lambda\quad\quad (A\in\uu,\lambda\in\mathbb{C}).\]
In this case 
$ Y\in\mathcal{M}^{\uu}_{\phi_{l}} $ and $ \mathcal{Z}_{\uu}(Y)=0 $. The linear map $ \delta:\mathcal{G}\rightarrow Y $ defined by $\delta(\begin{bmatrix}
a&b\\
0&0
\end{bmatrix})=b$
is a Lie derivation which is not derivation. Hence, $ \delta $ is not as mentioned in Theorem \ref{Lie}.
\end{itemize}
\end{exam}
%____________________________________________________________________________________

\subsection*{Declarations}
\begin{itemize}
\item[•] \textbf{Author's contribution:} All authors contributed to the study conception and design and approved the final manuscript. 
\item[•] \textbf{Funding:} The author declares that no funds, grants, or other support were received during the preparation of this manuscript.
\item[•] \textbf{Conflict of interest:} On behalf of all authors, the corresponding author states that there is no conflict of interest. 
\item[•] \textbf{Data availability:} Data sharing not applicable to this article as no datasets were generated or analysed during the current study. 
\end{itemize}

%________________________________________________________________________________________
\subsection*{Acknowledgment}
The authors like to express their sincere thanks to the referee(s) for this paper.
%________________________________________________________________________________________

%\subsection*{Acknowledgment}

%GATHER{Xbib.bib}   % For Gather Purpose Only

%GATHER{Paper.bbl}  % For Gather Purpose Only
\bibliographystyle{amsplain}
\bibliography{xbib}

\end{document}